    \renewcommand{\leq}{\leqslant}
\theoremstyle{plain}
\newtheorem{thm}{Theorem}[section]
\newtheorem*{thm*}{Theorem}
\newtheorem{conge}[thm]{Conjecture}
\theoremstyle{remark}
\newtheorem{oss}[thm]{Remark}
\theoremstyle{definition}
\title{\bf{Equivalence between invariance conjectures for parabolic Kazhdan-Lusztig polynomials}}
\author{}
\author{Paolo Sentinelli\thanks{ Dipartimento di Matematica, Politecnico di Milano, Milan, Italy. \\ \href{mailto:paolosentinelli@gmail.com}{paolosentinelli@gmail.com}}}
\date{}
\begin{document}

\maketitle

\vspace{-4em}

\begin{abstract}
We prove that the combinatorial invariance conjecture for parabolic Kazhdan-Lusztig polynomials, formulated by Mario Marietti, is equivalent to its restriction to maximal quotients. This equivalence lies at the other extreme in respect to the equivalence, recently proved by Barkley and Gaetz, with the invariance conjecture for Kazhdan-Lusztig polynomials, which turns out to be equivalent to the conjecture for maximal quotients.
\end{abstract}

$\,$

\section{Introduction}
\vspace{1em}
The parabolic Kazhdan-Lusztig polynomials have been introduced by V. Deodhar in \cite{deodhar}. They define canonical bases of modules of Hecke algebras, induced by trivial representations, the ones of type $x=q$, and alternating representations, the ones of type $x=-1$; in this second case, the parabolic Kazhdan-Lusztig polynomials are related to the intersection cohomology of Schubert varieties in $G/P$, for Kac-Moody
groups $G$ and standard parabolic subgroups $P$ (see \cite[Sec. 4]{deodhar} and \cite{kashiwara}). For type $x=q$, a categorification has been realized by Libedinsky and Williamson \cite{libedinsky}; this provides an interpretation of the parabolic Kazhdan-Lusztig polynomials of type $q$ as graded ranks of morphism spaces, and hence they are non--negative polynomials.

M. Marietti \cite[Conjecture 1.3]{marietti1} has proposed, and proved for lower Bruhat intervals in \cite{marietti2}, the following invariance conjecture for parabolic Kazhdan-Lusztig polynomials:
\vspace{1em}
\begin{conge} \label{cong 2} [Marietti]
Let $(W,S)$, $(W',S')$ be Coxeter systems, $J\subseteq S$ and $J'\subseteq S'$. If there exists a poset isomorphism $[u,v]^J \simeq [u',v']^{J'}$ which extends to an isomorphism $[u,v]\simeq [u',v']$, then $P^{J,x}_{u,v}=P^{J',x}_{u',v'}$, for all $x\in \{-1,q\}$.    
\end{conge}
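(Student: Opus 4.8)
The plan is to prove the equivalence announced in the abstract---that Conjecture~\ref{cong 2} holds in general if and only if it holds for maximal quotients ($|S\setminus J|=|S'\setminus J'|=1$)---by factoring it through ordinary Kazhdan--Lusztig invariance. Concretely, I would prove that Conjecture~\ref{cong 2} for arbitrary $J$ is equivalent to the ordinary combinatorial invariance conjecture, and then compose with the Barkley--Gaetz theorem, which asserts the equivalence of ordinary invariance with the maximal--quotient case; composing the two equivalences yields exactly the statement with maximal quotients. Of the equivalence with the ordinary conjecture, one direction is a free specialization: taking $J=J'=\varnothing$ gives $W^{\varnothing}=W$, so $[u,v]^{\varnothing}=[u,v]$, the extension hypothesis becomes vacuous, and $P^{\varnothing,x}_{u,v}=P_{u,v}$; thus the general conjecture restricted to $J=\varnothing$ is literally ordinary invariance. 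The entire content is therefore the reverse implication: ordinary invariance implies Conjecture~\ref{cong 2} for every $J$.

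For that implication I would invoke Deodhar's relations \cite{deodhar} between parabolic and ordinary polynomials. For $u,v\in W^J$ and type $x=-1$ these take the transparent form
\[
P^{J,-1}_{u,v}=\sum_{\substack{w\in W_J\\ uw\leq v}}(-1)^{\ell(w)}\,P_{uw,v},
\]
whose index set is exactly the fiber of the Bruhat projection $\pi\colon W\to W^J$ over $u$, intersected with $[u,v]$; for $x=q$ the companion relations of Deodhar likewise express $P^{J,q}_{u,v}$ through the ordinary polynomials of subintervals of $[u,v]$, with all coefficients determined by the type and by the length differences $\ell(z)-\ell(u)$. Hence, for both types, $P^{J,x}_{u,v}$ is determined by two kinds of data internal to the interval: the ordinary Kazhdan--Lusztig polynomials of subintervals of $[u,v]$, and the purely order-theoretic information consisting of the fibers of $\pi$ and the rank function.

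The point that makes the transfer work is that both pieces of data live on the pair $\bigl([u,v],\,[u,v]^J\bigr)$ furnished by the extension hypothesis. The projection is recovered order-theoretically as
\[
\pi(z)=\max\{\,y\in[u,v]^J: y\leq z\,\},
\]
which is legitimate because $\pi$ is order preserving and restricts to the identity on $W^J$; consequently any isomorphism $\psi\colon[u,v]\to[u',v']$ carrying $[u,v]^J$ onto $[u',v']^{J'}$ satisfies $\psi\circ\pi=\pi'\circ\psi$ and therefore maps the fiber over $u$ bijectively onto the fiber over $u'=\psi(u)$. Since Bruhat intervals are graded by length, $\psi$ preserves rank and so matches $\ell(z)-\ell(u)$ with $\ell(\psi(z))-\ell(u')$, hence matches the coefficients of the relations term by term. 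Finally $\psi$ restricts to isomorphisms $[z,v]\simeq[\psi(z),v']$ of the subintervals carrying the ordinary polynomials; under ordinary invariance these subintervals have equal ordinary polynomials, and summing the matched terms gives $P^{J,x}_{u,v}=P^{J',x}_{u',v'}$.

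The main obstacle I expect is making the reduction to the ordinary case completely rigorous for \emph{both} types at once: one must pin down the exact shape of Deodhar's relations---in particular, for $x=q$, that the companion relations reintroduce no data beyond the ordinary polynomials of subintervals of $[u,v]$ and functions of length---so that nothing outside the pair $\bigl([u,v],[u,v]^J\bigr)$ enters the computation. The order-theoretic recovery of $\pi$ isolates the only place where the extension hypothesis is genuinely used, and once the length-governed coefficients are confirmed the transport along $\psi$ is formal; the sole external ingredient is the Barkley--Gaetz equivalence, applied at the very end to restate ordinary invariance as invariance for maximal quotients.
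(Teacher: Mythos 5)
The proposal has a genuine gap, and it is located exactly where the paper's key idea would have to appear. Your plan factors the desired equivalence as (Conjecture~\ref{cong 2} $\Leftrightarrow$ Conjecture~\ref{cong 1}) composed with (Conjecture~\ref{cong 1} $\Leftrightarrow$ Conjecture~\ref{cong 3}), and you attribute the second equivalence to Barkley--Gaetz as ``the sole external ingredient.'' But Barkley--Gaetz prove Conjecture~\ref{cong 1} $\Leftrightarrow$ Conjecture~\ref{cong 2}, i.e.\ exactly the equivalence you propose to re-prove in your Step A; they do \emph{not} prove any equivalence with the maximal-quotient conjecture. The implication ``Conjecture~\ref{cong 3} $\Rightarrow$ Conjecture~\ref{cong 2}'' is precisely the new content of the paper, and it does not follow formally from Conjecture~\ref{cong 1} $\Leftrightarrow$ Conjecture~\ref{cong 2}, since Conjecture~\ref{cong 3} concerns only quotients $W^{S\setminus\{s\}}$ and says nothing directly about $J=\varnothing$. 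The paper obtains it from Theorem~\ref{cor massimale}: adjoin a new generator $\tilde{s}$ whose Coxeter entries are $2$ exactly on $J$, so that $P^{J,x}_{u,v}[W,S]=P^{S,x}_{u\tilde{s},v\tilde{s}}[\tilde{W},\tilde{S}]$, and check (via the poset isomorphism $x\mapsto x\tilde{s}$ from \cite[Proposition 3.2]{io}) that the pair $\bigl([u,v],[u,v]^J\bigr)$ transports isomorphically to $\bigl([u\tilde{s},v\tilde{s}],[u\tilde{s},v\tilde{s}]^{S}\bigr)$, so the maximal-quotient conjecture can be applied in the enlarged system. Your proposal contains no mechanism of this kind, so the direction from maximal quotients back to arbitrary $J$ is simply absent; as written, the argument is circular at its final step.

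The secondary gap is in your Step A itself, for type $x=q$. For $x=-1$ your reduction is essentially sound: Deodhar's formula $P^{J,-1}_{u,v}=\sum_{w\in W_J}(-1)^{\ell(w)}P_{uw,v}$ has exactly the shape you need, the fiber $uW_J\cap[u,v]$ is recovered order-theoretically from the pair via $\pi(z)=\max\{y\in[u,v]^J:y\leq z\}$ (correct, since the projection is order-preserving and fixes $W^J$), and the signs are rank differences, so Conjecture~\ref{cong 1} transports the sum term by term. But for $x=q$ there is no ``companion relation'' of Deodhar confined to subintervals of $[u,v]$: the classical statement is $P^{J,q}_{u,v}=P_{uw_J,vw_J}$, which requires $W_J$ finite and involves the interval $[uw_J,vw_J]$, which lies outside $[u,v]$ and is not determined by the pair $\bigl([u,v],[u,v]^J\bigr)$. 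This is exactly why the Barkley--Gaetz theorem is nontrivial; the obstacle you flag at the end is not a technical verification but the substance of that theorem, and asserting the needed relations exists begs the question. By contrast, the paper's route via Theorem~\ref{cor massimale} handles both types $x\in\{-1,q\}$ uniformly and never needs any Deodhar-type expansion.
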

\vspace{1em}
Conjecture \ref{cong 2} restricts, for $J=\varnothing$, to the well known invariance conjecture of Kazhdan-Lusztig polynomials.

\vspace{1em}
\begin{conge}\label{cong 1} [Lusztig, Dyer] Let $[u,v]$ and $[u',v']$ be Bruhat intervals in the Coxeter systems $(W,S)$ and $(W',S')$, respectively. Then 
$$[u,v] \simeq [u',v'] \, \, \Rightarrow \, \, P_{u,v}=P_{u',v'}.$$ 
\end{conge}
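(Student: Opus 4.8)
It should be said at once that Conjecture \ref{cong 1} is the celebrated \emph{combinatorial invariance conjecture} of Lusztig and Dyer, a problem that has been open for decades; what follows is therefore a strategy rather than a proof I expect to complete. The plan is to pass from the Kazhdan-Lusztig polynomials to the $R$-polynomials $R_{u,v}$, since the two are tied together by the Kazhdan-Lusztig recursion and since the full table of $R$-polynomials on an interval, together with its rank function, determines every $P_{x,y}$ inside it. The aim is then to produce a computation of $P_{u,v}$ that inspects only the abstract poset $[u,v]$ and never the ambient data $(W,S)$; once such a computation is in hand, invariance under poset isomorphism is immediate. The subtlety is that the $R$-polynomials are not themselves manifestly poset-determined, so this is a genuine reformulation rather than a reduction to something easy.

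The principal tool I would use is the theory of \emph{special matchings}. A special matching of a finite graded poset is an involution $M$ that pairs each element with a Hasse neighbour and respects the order in the sense that $x \lessdot y$ with $M(y)<y$ forces $M(x)\leq M(y)$; for Bruhat intervals, left or right multiplication by an appropriate generator provides such matchings. A theorem of Brenti, building on du Cloux's study of these matchings, shows that any special matching induces a recursion computing the $R$-polynomials, and hence the $P$-polynomials, of the interval. Since the matchings themselves are defined order-theoretically, the programme is: (i) show that every Bruhat interval admits enough special matchings to drive this recursion down to trivial intervals; (ii) show that the resulting polynomial is independent of all the choices of matching made along the way; and (iii) deduce that the output, now a function of the poset alone, coincides on isomorphic intervals.

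The main obstacle is precisely (i)--(ii) in full generality. For \emph{lower} intervals $[e,v]$ the programme succeeds, and this is the route by which combinatorial invariance is known there, via the work of Brenti, Caselli and Marietti; it is exactly such lower-interval information that the reductions of the present paper, combined with the equivalence of Barkley and Gaetz, aim to exploit by collapsing Conjecture \ref{cong 1}, Conjecture \ref{cong 2} and the maximal-quotient version into a single problem. For a general interval, however, there is no known guarantee that special matchings alone recover $P_{u,v}$, nor that the recursion is choice-independent, and this is where the argument stalls. The most promising modern refinement is to replace the bare recursion by the conjectural hypercube-decomposition formula of Blundell, Buesing, Davies, Veli\v{c}kovi\'{c} and Williamson, which expresses $P_{u,v}$ through poset-intrinsic data; converting that formula, or the maximal-quotient reduction assembled here, into an unconditional statement is the crux on which everything ultimately turns.
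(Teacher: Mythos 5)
You have not proved Conjecture \ref{cong 1}, and you say so yourself; it is equally important to note that the paper does not prove it either, and could not, since this is precisely the long-open Lusztig--Dyer combinatorial invariance conjecture. It appears in the paper only as a \emph{statement of a conjecture}: the paper's actual contribution is Theorem \ref{teorema equiv}, namely that Conjecture \ref{cong 1} is equivalent to Marietti's parabolic Conjecture \ref{cong 2} (that half is quoted from Barkley and Gaetz \cite{gaetz}) and that Conjecture \ref{cong 2} is in turn equivalent to its restriction to maximal quotients, Conjecture \ref{cong 3} (the new part, proved via Theorem \ref{cor massimale}). So there is no proof in the paper against which your attempt can be compared; the only meaningful review is whether your sketch is an honest and accurate account of the state of the art, and whether it claims more than it delivers. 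It does not overclaim, which is the right call.

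As a survey your proposal is essentially sound, with two caveats. First, the attributions and definitions are loose: the result that special matchings compute the $R$- and Kazhdan--Lusztig polynomials of \emph{lower} intervals $[e,v]$ is due to Brenti, Caselli and Marietti (with du Cloux's work as a precursor), and Marietti \cite{marietti2} is the parabolic extension for lower intervals; moreover your stated axiom for a special matching is garbled --- the standard condition is that for every covering relation $x \lessdot y$ with $M(x)\neq y$ one has $M(x)<M(y)$. Second, and more substantively, your steps (i)--(iii) conceal the entire difficulty: for a general interval $[u,v]$ it is not known that special matchings exist in sufficient supply, nor that any matching-driven recursion computes $P_{u,v}$, nor that the output is choice-independent; the hypercube-decomposition formula you invoke is itself conjectural outside the symmetric-group cases where it has been verified. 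In other words, your strategy reproduces the known partial results and then stalls exactly where the conjecture is open, which is unavoidable. If the intended exercise was to engage with what this paper actually establishes, the correct object of study would have been Theorem \ref{teorema equiv} and its reduction to maximal quotients, not Conjecture \ref{cong 1} itself.
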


\vspace{1em}
It is known, by a result of Deodhar, that the Bruhat order splits along maximal quotients (see \cite[Theorem 2.6.1]{BB}). An analogous result for Kazhdan-Lusztig polynomials is not known; nevertheless, globally, i.e. by considering the whole class of Coxeter systems, the knowledge of parabolic Kazhdan-Luszig polynomials for maximal quotients is enough to compute all other polynomials of any quotient. This is the content of \cite[Theorem 5.8]{io} (see Theorem \ref{cor massimale} below). From this fact one can argue that also Conjecture \ref{cong 2} can be restricted to maximal quotients; this is in fact true and it is proved in Theorem \ref{teorema equiv}.  

In \cite{gaetz}, Barkley and Gaetz have recently provided an equivalence between Conjectures \ref{cong 2} and \ref{cong 1}; therefore, our equivalence between Conjectures \ref{cong 2} and \ref{cong 3} implies an equivalence between two conjectures that lie at the extreme sides: the side of “minimal” quotients and the one of maximal quotients. 
Then, by resuming the previous assertions, we have the following (Theorem \ref{teorema equiv}): 

$$\mbox{Conjecture \ref{cong 1} \, $\Leftrightarrow$ \,  Conjecture \ref{cong 2} \, $\Leftrightarrow$ \,  Conjecture \ref{cong 3}}.$$

 \vspace{1em}

We observe that the equivalence between Conjectures \ref{cong 2} and \ref{cong 3} could be not true if we restrict to a specific set of Coxeter systems. This means that partial results on Conjecture \ref{cong 3} do not necessarily correspond to partial results on Conjecture \ref{cong 2}. Analogous considerations, but for different reasons, can be done for the equivalence between Conjectures \ref{cong 1} and \ref{cong 2} (see \cite{gaetz}).

\vspace{1em}
\section{Equivalences of conjectures}
 \vspace{1em}
In this section we formulate a combinatorial invariance conjecture  for parabolic Kazhdan-Lusztig polynomials by restricting Conjecture \ref{cong 2} to maximal quotients. We then prove that these two conjectures are equivalent.

We refer to \cite{io} for notation and preliminaries. The only difference to be pointed out is the side of the indices in $W^J$ and ${^{J}W}$. Here we let, for a Coxeter system $(W,S)$ and $J\subseteq S$, 
\begin{gather*} W^J:=\Set{w\in W:\ell(w)<\ell(ws)~\forall~s\in J},
\\ {^JW}:=\Set{w\in W:\ell(w)<\ell(sw)~\forall~s\in J}.
\end{gather*} On the contrary, in \cite{io} the upper indices are flipped, so some results have to be considered accordingly to this switch. 
Given a Coxeter system, we consider the Bruhat order on $W$ and, on any subset of $W$, the induced order. For $J\subseteq S$,  we let $$[u,v]^J:=\{z\in W^J: u \leq z \leq v\} \, \subseteq \, [u,v] := \{z\in W: u \leq z \leq v\},$$ for all $u,v\in W^J$ such that $u\leq v$.
Let us formulate the restriction of Conjecture \ref{cong 2} to maximal quotients.

\vspace{1em}
\begin{conge} \label{cong 3} Let $(W,S)$, $(W',S')$ be Coxeter systems, $s\in S$ and  $s'\in S'$. If there exists a poset isomorphism $[u,v]^{S\setminus \{s\}} \simeq [u',v']^{S\setminus \{s'\}}$ which extends to an isomorphism $[u,v]\simeq [u',v']$, then $P^{S\setminus \{s\},x}_{u,v}=P^{S'\setminus \{s'\},x}_{u',v'}$, for all $x\in \{-1,q\}$.  
\end{conge}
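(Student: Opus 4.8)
The plan is to prove the equality of polynomials directly, by exhibiting $P^{S\setminus\{s\},x}_{u,v}$ as the output of a purely order-theoretic computation on the decorated interval $([u,v],[u,v]^{S\setminus\{s\}})$ and then transporting that computation across the given isomorphism. The first step is to pass from the $P$-polynomials to the parabolic $R$-polynomials $R^{S\setminus\{s\},x}_{a,b}$ with $a,b\in[u,v]^{S\setminus\{s\}}$. These determine the $P$-polynomials through the standard self-duality (inversion) relation for the parabolic Kazhdan--Lusztig basis, which is triangular with respect to $\ell$, uses only the rank function together with the degree bound $\deg P^{S\setminus\{s\},x}_{a,b}<(\ell(b)-\ell(a))/2$, and involves no further group-theoretic input. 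Since a poset isomorphism preserves rank up to a global shift, it is enough to prove that the two intervals carry matching families of parabolic $R$-polynomials.

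The heart of the argument is a special-matching computation of these $R$-polynomials. In Deodhar's recursion \cite{deodhar} for $R^{S\setminus\{s\},x}_{a,b}$, the choice of a simple reflection enters only through covering relations and rank comparisons inside $[u,v]$, with a single boundary term---the one carrying the parameter $x$---appearing exactly when multiplication by the reflection leaves the quotient $W^{S\setminus\{s\}}$. The idea, following Marietti's treatment of lower intervals in \cite{marietti2}, is to replace each such algebraic reflection by a \emph{special matching} of the Hasse diagram: a special matching of the quotient drives the recursion combinatorially and records precisely the boundary terms. Here the hypothesis that the quotient isomorphism \emph{extends} to an isomorphism $[u,v]\simeq[u',v']$ is what makes the strategy viable, and it is used twice. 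It lets me lift a special matching of the full interval to one compatible with the partition of $[u,v]$ into cosets of $W_{S\setminus\{s\}}$, and it guarantees that the induced matching on the quotient is transported faithfully, so that the boundary terms detected on the two sides correspond.

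Once such a system of matchings is available, the recursion computes $R^{S\setminus\{s\},x}_{u,v}$ from the isomorphism type of the decorated interval alone; transporting the matchings through $[u,v]\simeq[u',v']$ then gives $R^{S\setminus\{s\},x}_{u,v}=R^{S'\setminus\{s'\},x}_{u',v'}$, and the inversion relation upgrades this to the claimed identity $P^{S\setminus\{s\},x}_{u,v}=P^{S'\setminus\{s'\},x}_{u',v'}$ for both $x=-1$ and $x=q$. The main obstacle is the existence of a \emph{complete} system of special matchings for an \emph{arbitrary} interval $[u,v]$: the matchings produced by one-sided multiplication, which suffice when the lower endpoint is the identity (the lower-interval case resolved in \cite{marietti2}), need not generate enough matchings to discharge every step of the recursion for a general $u$, and the bare poset isomorphism supplies none of the missing Coxeter-group data. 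This is exactly where the problem meets the general combinatorial invariance phenomenon, and handling it uniformly over all maximal quotients is what a direct proof of Conjecture \ref{cong 3} would demand.
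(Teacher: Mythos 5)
The statement you set out to prove is Conjecture~\ref{cong 3} of the paper: it is a \emph{conjecture}, not a theorem, and the paper contains no proof of it. What the paper actually proves (Theorem~\ref{teorema equiv}) is that Conjecture~\ref{cong 3} is \emph{equivalent} to Marietti's Conjecture~\ref{cong 2} and, via the result of Barkley and Gaetz \cite{gaetz}, to the classical Lusztig--Dyer invariance conjecture (Conjecture~\ref{cong 1}); all three remain open. So a correct blind proof of this statement is not something one could expect to produce, and indeed your proposal is not a proof: its final paragraph concedes the decisive gap.

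Concretely, the gap is exactly the one you name: the existence of a complete system of special matchings on an \emph{arbitrary} Bruhat interval $[u,v]$, compatible with the coset decomposition, which drives Deodhar's recursion for the parabolic $R$-polynomials. The special-matching machinery is only known to compute (parabolic) Kazhdan--Lusztig polynomials for \emph{lower} intervals $[e,v]$ --- this is Marietti's theorem in \cite{marietti2} --- where multiplication matchings are available and the proof that every special matching is ``calculating'' uses $u=e$ in an essential way. For general $u$: the poset $[u,v]$ need not carry any special matchings adapted to the cosets of $W_{S\setminus\{s\}}$; even when special matchings exist, it is not known that they compute the $R$-polynomials; and the hypothesis that the quotient isomorphism extends to $[u,v]\simeq[u',v']$ supplies purely order-theoretic data, not the Coxeter-theoretic input needed to manufacture such matchings. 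Your strategy therefore restates the difficulty rather than resolving it --- unsurprisingly, since by Theorem~\ref{teorema equiv} a proof along these lines would settle the full combinatorial invariance conjecture. Note also that the paper's actual contribution involving this statement runs in the opposite direction: using Theorem~\ref{cor massimale}, i.e. the identity $P^{J,x}_{u,v}[W,S]=P^{S,x}_{u\tilde{s},v\tilde{s}}[\tilde{W},\tilde{S}]$ obtained by adjoining a new generator $\tilde{s}$, it shows that Conjecture~\ref{cong 3}, \emph{if assumed}, implies Conjecture~\ref{cong 2}; it never attempts to prove Conjecture~\ref{cong 3} itself.
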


\vspace{1em}
One of the results of \cite{io} is that any parabolic Kazhdan-Lusztig polynomial of a Coxeter system $(W,S)$ is equal to a parabolic Kazhdan-Lusztig polynomial of a maximal quotient of some Coxeter system $(\tilde{W},\tilde{S})$, as stated in the next theorem (see \cite[Theorem 5.8]{io}), 
where we denote by $P^{J,x}_{u,v}[W,S]$ a polynomial computed in the Coxeter system $(W,S)$.

\vspace{1em}
\begin{thm} \label{cor massimale} Let $(W,S)$ be a Coxeter system, $J\subseteq S$ and $v,w\in W^J$. Let $\tilde{S}:=S\cup \{\tilde{s}\}$, with $\tilde{s}\not \in S$, and $(\tilde{W},\tilde {S})$ be a Coxeter system whose Coxeter matrix $m'$ satisfies: 
\begin{enumerate}
    \item $m'(s,t)=m(s,t)$, for all $s,t\in S$;
    \item $m'(\tilde{s},s)=2$, for all $s\in J$;
    \item $m'(\tilde{s},s)\neq 2$, for all $s\in S\setminus J$,
\end{enumerate} where $m$ is the Coxeter matrix of $(W,S)$.
Then 
    $P^{J,x}_{v,w}[W,S]=P^{S,x}_{v\tilde{s},w\tilde{s}}[\tilde{W},\tilde{S}]$, for all $x\in \{-1,q\}$.
\end{thm}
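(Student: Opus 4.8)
The plan is to realise both polynomials as coefficients of the Kazhdan--Lusztig basis in the two parabolic Hecke modules and to compare those modules through the map $\phi\colon u\mapsto u\tilde{s}$. By hypothesis (1) the original system $(W,S)$ is the standard parabolic subsystem of $(\tilde{W},\tilde{S})$ supported on $S$, so $W=W_S$, $H(W)=H_S\subseteq H(\tilde{W})$, and length and Bruhat order on $W$ coincide with those induced from $\tilde{W}$. Recall that for $K\subseteq S$ the parabolic module $M^{K,x}$ is the induced left $H(W)$-module with standard basis $\{m_z:z\in W^K\}$ (where $W^K$ are the minimal coset representatives, i.e. the elements with no right $K$-descent), bar involution fixing $m_e$, and canonical basis $\underline{m}_z=\sum_u P^{K,x}_{u,z}\,m_u$; the action obeys Deodhar's trichotomy: for $s\in S$ and $z\in W^K$ one has $T_s m_z=m_{sz}$ if $sz>z\in W^K$, $\;T_s m_z=m_{sz}+(q-1)m_z$ if $sz<z\in W^K$, and $T_s m_z=x\,m_z$ if $sz=zt\notin W^K$ with $t\in K$. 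The theorem thus amounts to comparing $M^{J,x}[W,S]$ with $M^{S,x}[\tilde{W},\tilde{S}]$.

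First I would set up the combinatorial backbone. Using (2) and (3) I would show that, for $u\in W$, one has $u\tilde{s}\in\tilde{W}^{S}$ if and only if $u\in W^{J}$: if $u\notin W^J$, a right $J$-descent $t$ of $u$ survives as a right descent of $u\tilde{s}$ because $(u\tilde{s})t=(ut)\tilde{s}$ by the commutation in (2); conversely, for $u\in W^{J}$, condition (2) removes all descents in $J$ while condition (3) makes $\tilde{s}s$ reduced for $s\in S\setminus J$, so $u\tilde{s}s$ is reduced and no descent from $S\setminus J$ appears. Hence $\phi$ maps $W^{J}$ injectively into $\tilde{W}^{S}$ with $\ell(u\tilde{s})=\ell(u)+1$. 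The crucial refinement is a support statement: since every reduced word of $w\tilde{s}$ contains a single letter $\tilde{s}$, the subword characterisation of Bruhat order shows that any $z\in\tilde{W}^{S}$ with $z\leq w\tilde{s}$ is either $e$ or of the form $u\tilde{s}$ with $u\in W^{J}$, $u\leq w$. In particular $\phi$ restricts to the poset isomorphism $[v,w]^{J}\simeq[v\tilde{s},w\tilde{s}]^{S}$.

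Next I would upgrade $\phi$ to the algebra. Writing $N=\spn\{m_{u\tilde{s}}:u\in W^{J}\}\subseteq M^{S,x}[\tilde{W},\tilde{S}]$, I would verify case by case that $T_s m_{u\tilde{s}}$ mirrors $T_s m_{u}$ under $\phi$ for every $s\in S$. The parabolic factorisation $su=(su)^{J}(su)_{J}$ together with $(su)_{J}\tilde{s}=\tilde{s}(su)_{J}$ (condition (2)) yields $s\cdot(u\tilde{s})=(su)^{J}\tilde{s}\cdot(su)_{J}$ as a length-additive $W_S$-coset decomposition, so the trichotomy for $(u\tilde{s},s)$ in $\tilde{W}^{S}$ matches that for $(u,s)$ in $W^{J}$ verbatim, the degenerate scalar $x$ occurring on both sides exactly when $su\notin W^{J}$. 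Therefore $\phi$ is an isomorphism of left $H(W)$-modules $M^{J,x}[W,S]\xrightarrow{\sim}N$. Moreover, since $T_s m_e=x\,m_e$ for $s\in S$, the rank-one submodule $\Z[v,v^{-1}]\,m_e$ is $H(W)$-stable, and by the support statement of the previous paragraph $N\oplus\Z[v,v^{-1}]\,m_e$ is stable under the bar involution.

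It remains to transport the canonical basis, which is the crux. Working in the normalisation $v=q^{1/2}$ (so that $\overline{m_z}=m_z+(\text{lower})$ and, by the length-additivity $\ell(u\tilde{s})=\ell(u)+1$, the normalising factors cancel when reading off $P^{J,x}$ versus $P^{S,x}$), I would prove by induction on $\ell(u)$ the compatibility $\overline{m_{u\tilde{s}}}\equiv\phi(\overline{m_u})\pmod{\Z[v,v^{-1}]\,m_e}$: the base case is $\overline{m_{\tilde{s}}}=m_{\tilde{s}}+(v-v^{-1})m_e$, and the inductive step uses $m_{u\tilde{s}}=T_s m_{(su)\tilde{s}}$ for a left descent $s$, the $H(W)$-linearity of $\phi$, and the $H(W)$-stability of $\Z[v,v^{-1}]\,m_e$. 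Granting this, a short computation gives $\overline{\phi(\underline{m}_w)}=\phi(\underline{m}_w)+d\,m_e$ with $\bar d=-d$, so a unique $c\in v\Z[v]$ makes $\phi(\underline{m}_w)+c\,m_e$ bar-invariant; as this element has leading term $m_{w\tilde{s}}$ and the correct degree bounds, uniqueness of the Kazhdan--Lusztig basis forces $\phi(\underline{m}_w)+c\,m_e=\underline{m}_{w\tilde{s}}$. Reading off the coefficient of $m_{u\tilde{s}}$, which is unaffected by the $m_e$ term, yields $P^{S,x}_{u\tilde{s},w\tilde{s}}=P^{J,x}_{u,w}$ for all $u\in W^{J}$, and $u=v$ is the assertion. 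The main obstacle is precisely this bar-compatibility lemma: one must confront that $\phi$ raises length by one and that the bar involution on $M^{S,x}[\tilde{W},\tilde{S}]$ is defined through $H(\tilde{W})$, hence through $T_{\tilde{s}}$, which creates the $m_e$-term, and show that the discrepancy never escapes the single harmless direction $\Z[v,v^{-1}]\,m_e$ — for which the support statement of the second step, valid uniformly for $x=q$ and $x=-1$, is indispensable.
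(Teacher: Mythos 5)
This theorem is not actually proved in the paper: it is imported verbatim from \cite[Theorem 5.8]{io}, so the only meaningful comparison is with the Hecke--module method of that reference. Your proposal is correct in substance and is essentially that method: you transport $M^{J,x}[W,S]$ into $M^{S,x}[\tilde{W},\tilde{S}]$ along $m_u\mapsto m_{u\tilde{s}}$ (the underlying poset isomorphism $W^J\simeq \tilde{W}^S\cap W\tilde{s}$ is exactly what the present paper later extracts from \cite[Proposition 3.2]{io}), match Deodhar's trichotomy case by case to get an $H_S$-module embedding, control the bar involution modulo the $H_S$-stable line $\Z[v,v^{-1}]\,m_e$, and finish by uniqueness of the Kazhdan--Lusztig basis; since the length shift $\ell(u\tilde{s})=\ell(u)+1$ is uniform, the normalizing factors cancel and the polynomials can be read off away from the $m_e$-coordinate, uniformly for $x=q$ and $x=-1$.

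Two steps deserve tightening, though neither is fatal. First, the claim that $\ell(u\tilde{s}s)=\ell(u)+2$ for $u\in W^J$ and $s\in S\setminus J$ does not follow from ``$\tilde{s}s$ is reduced'': any two-letter word in distinct generators is reduced, including when $m'(\tilde{s},s)=2$, in which case the claim genuinely fails whenever $\ell(us)<\ell(u)$. The honest argument uses the exchange condition: if $\ell(u\tilde{s}s)<\ell(u\tilde{s})$, then $u\tilde{s}s$ is obtained from the reduced word $u_1\cdots u_k\tilde{s}$ by deleting one letter, so either $u\tilde{s}s=u$ (forcing $\tilde{s}s=e$, absurd) or $u\tilde{s}s=u'\tilde{s}$ with $u'\in W$, forcing $\tilde{s}s\tilde{s}=u^{-1}u'\in W$; the latter is impossible because $m'(\tilde{s},s)\geq 3$ makes $\tilde{s}s\tilde{s}$ reduced with $\tilde{s}$ in its support, hence outside $\tilde{W}_S=W$. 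This is precisely where hypothesis (3) enters, so it should not be glossed over. Second, minor convention slips: the descent case of the trichotomy should read $T_sm_z=qm_{sz}+(q-1)m_z$ (or $m_{sz}+(v-v^{-1})m_z$ in the normalized setting), and the base case of your induction is $\overline{m_{\tilde{s}}}=m_{\tilde{s}}-(v-v^{-1})m_e$; with these corrections the induction $\overline{m_{u\tilde{s}}}\equiv\phi(\overline{m_u}) \pmod{\Z[v,v^{-1}]\,m_e}$ and the final uniqueness argument go through as you describe.
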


\vspace{1em}
For example, if $(W,S)$ is of  type $A_n$, with $S=\{s_1,\ldots,s_n\}$, and $J=S\setminus \{s_1,s_n\}$, we have that $(\tilde{W},\tilde{S})$ can be chosen of type $\tilde{A}_n$. Hence we have the equality $P^{S\setminus \{s_1,s_n\},x}_{v,w}[A_n]=P^{S,x}_{vs_{n+1},ws_{n+1}}[\tilde{A}_n]$, for all $v,w \in W^{S\setminus \{s_1,s_n\}}$, where the set of generators of $\tilde{A}_n$ is $S\cup\{s_{n+1}\}$.

 \vspace{1em}
We now can prove our announced result.
\begin{thm} \label{teorema equiv} We have the following equivalences of conjectures:
    $$\mbox{Conjecture \ref{cong 1} \, $\Leftrightarrow$ \,  Conjecture \ref{cong 2} \, $\Leftrightarrow$ \,  Conjecture \ref{cong 3}}$$
\end{thm}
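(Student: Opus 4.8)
The plan is to derive the full chain of equivalences from three ingredients: the equivalence Conjecture~\ref{cong 1}~$\Leftrightarrow$~Conjecture~\ref{cong 2} established by Barkley and Gaetz in \cite{gaetz}, the trivial implication Conjecture~\ref{cong 2}~$\Rightarrow$~Conjecture~\ref{cong 3}, and the reduction provided by Theorem~\ref{cor massimale}. For the trivial implication, observe that Conjecture~\ref{cong 3} is literally the instance of Conjecture~\ref{cong 2} obtained by taking $J=S\setminus\{s\}$ and $J'=S'\setminus\{s'\}$. Hence it suffices to prove the converse implication Conjecture~\ref{cong 3}~$\Rightarrow$~Conjecture~\ref{cong 2}; together with the Barkley--Gaetz equivalence this closes the loop among the three statements.

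I would therefore concentrate on Conjecture~\ref{cong 3}~$\Rightarrow$~Conjecture~\ref{cong 2}. Assuming Conjecture~\ref{cong 3}, suppose we are given the hypotheses of Conjecture~\ref{cong 2}: a poset isomorphism $\phi\colon [u,v]^J\simeq [u',v']^{J'}$ that extends to a poset isomorphism $\Phi\colon [u,v]\simeq [u',v']$. Applying Theorem~\ref{cor massimale} to each side, I would enlarge $(W,S)$ to $(\tilde{W},\tilde{S})$ with $\tilde{S}=S\cup\{\tilde{s}\}$ and $(W',S')$ to $(\tilde{W}',\tilde{S}')$ with $\tilde{S}'=S'\cup\{\tilde{s}'\}$, obtaining
$$P^{J,x}_{u,v}[W,S]=P^{S,x}_{u\tilde{s},v\tilde{s}}[\tilde{W},\tilde{S}], \qquad P^{J',x}_{u',v'}[W',S']=P^{S',x}_{u'\tilde{s}',v'\tilde{s}'}[\tilde{W}',\tilde{S}'].$$
The two right-hand sides are parabolic polynomials for the maximal quotients $S=\tilde{S}\setminus\{\tilde{s}\}$ and $S'=\tilde{S}'\setminus\{\tilde{s}'\}$, so Conjecture~\ref{cong 3} will yield their equality, and hence the desired $P^{J,x}_{u,v}=P^{J',x}_{u',v'}$, as soon as I verify its hypotheses: a poset isomorphism $[u\tilde{s},v\tilde{s}]^{S}\simeq [u'\tilde{s}',v'\tilde{s}']^{S'}$ that extends to $[u\tilde{s},v\tilde{s}]\simeq [u'\tilde{s}',v'\tilde{s}']$.

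The heart of the argument, and the step I expect to be the main obstacle, is to produce canonical poset isomorphisms relating the original intervals to the enlarged ones. Concretely, I would show that the map $w\mapsto w\tilde{s}$ induces a poset isomorphism $[u,v]\simeq [u\tilde{s},v\tilde{s}]$ whose restriction is a poset isomorphism $[u,v]^J\simeq [u\tilde{s},v\tilde{s}]^{S}$, these two being compatible, and likewise on the primed side. Granting this, composing
$$[u\tilde{s},v\tilde{s}]\ \simeq\ [u,v]\ \xrightarrow{\,\Phi\,}\ [u',v']\ \simeq\ [u'\tilde{s}',v'\tilde{s}']$$
together with its restriction through $\phi$ produces exactly the compatible pair of isomorphisms required by Conjecture~\ref{cong 3}, completing the proof. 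Establishing the canonical isomorphism $[u,v]\simeq[u\tilde{s},v\tilde{s}]$ is the delicate point: that $w\mapsto w\tilde{s}$ is order-preserving is immediate from the subword characterization (since $\tilde{s}\notin S$ always increases length), but surjectivity onto the interval, together with the identification of the relevant minimal coset representatives $w\in W^J$ with $w\tilde{s}\in\tilde{W}^{S}$, relies on the finer analysis of the Bruhat order in $(\tilde{W},\tilde{S})$ underlying \cite[Theorem~5.8]{io}, in particular on the fact that $\tilde{s}$ commutes with $J$ but with no element of $S\setminus J$. This is where I would spend most of the effort.
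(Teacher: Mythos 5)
Your proposal is correct and follows essentially the same route as the paper: Barkley--Gaetz for the first equivalence, the trivial specialization for Conjecture~\ref{cong 2}~$\Rightarrow$~Conjecture~\ref{cong 3}, and for the converse the transport of $\Phi$ and $\phi$ along $w\mapsto w\tilde{s}$ to the enlarged intervals, followed by Conjecture~\ref{cong 3} and Theorem~\ref{cor massimale} on both sides. The ``delicate point'' you isolate---that $z\mapsto z\tilde{s}$ gives $[u,v]\simeq[u\tilde{s},v\tilde{s}]$ restricting to $[u,v]^J\simeq[u\tilde{s},v\tilde{s}]^{S}$---is exactly what the paper settles by specializing the proof of \cite[Proposition 3.2]{io} (giving the poset isomorphism $W^J\to\tilde{W}^{S}\cap W\tilde{s}$), the same source you point to.
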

\begin{proof}
    The first equivalence is part of \cite[Theorem 1.6]{gaetz}. We prove now the second one. It is obvious that Conjecture \ref{cong 2} implies Conjecture \ref{cong 3}. So assume Conjecture \ref{cong 3}. Let $J\subseteq S$, $J'\subseteq S'$ and $\phi: [u,v]^{J}\rightarrow [u',v']^{J'}$ be a poset isomorphism which extends to an isomorphism $\Phi: [u,v]\rightarrow [u',v']$. Let $(\tilde{W},\tilde{S})$ and $(\tilde{W}',\tilde{S}')$ be as in Theorem \ref{cor massimale}. The function $\tilde{\Phi}:[u\tilde{s},v\tilde{s}]\rightarrow [u'\tilde{s}',v'\tilde{s}']$ defined by $\tilde{\Phi}(z\tilde{s})=\Phi(z)\tilde{s}'$, for all $z\in [u,v]$, is a poset isomorphism. 

    By specializing the proof of \cite[Proposition 3.2]{io} to $W=\tilde{W}$, $I=J=S$ and $u=\tilde{s}$, and noting that by construction $\tilde{W}_S=W$,  we have a poset isomorphism
    $f: W^J \rightarrow  \tilde{W}^S\cap W\tilde{s}$ such that $f(x)=x\tilde{s}$, for all $x \in W^J$.
    Hence we have that
    $[u\tilde{s},v\tilde{s}]^S=\{z\tilde{s}: z\in [u,v]^J\}$. Similarly 
    $[u'\tilde{s}',v'\tilde{s}']^{S'}=\{z\tilde{s}': z\in [u',v']^{J'}\}$.
    This implies $\tilde{\Phi}(z\tilde{s})=\phi(z)\tilde{s}' \in [u'\tilde{s}',v'\tilde{s}']^{S'}$, for all $z\in [u,v]^J$, and then 
    the restriction $\tilde{\Phi}|_{[u\tilde{s},v\tilde{s}]^S}$ provides an isomorphism $[u\tilde{s},v\tilde{s}]^S\simeq  [u'\tilde{s}',v'\tilde{s}']^{S'}$. Therefore, by Theorem \ref{cor massimale} and having assumed Conjecture \ref{cong 3}, we obtain
    $$P^{J,x}_{u,v}[W,S]=P^{S,x}_{u\tilde{s},v\tilde{s}}[\tilde{W},\tilde{S}]=P^{\tilde{S}',x}_{u'\tilde{s}',v'\tilde{s}'}[\tilde{W}',\tilde{S}']=P^{J',x}_{u',v'}[W',S'],$$ for all $x\in \{-1,q\}$.
\end{proof}

 \vspace{1em}
We end this note by the following observation.
Let $X\subseteq \{\infty, 3,4,5,\ldots\}$ be such that $X\neq \varnothing$.
We say that a Coxeter system $(W,S)$ with Coxeter matrix $m$ is \emph{of class} $X$ if $m(s,t) \in X\cup \{1,2\}$.
For example, for $X=\{3\}$ we obtain the class of simply-laced Coxeter systems and, for $X=\{\infty\}$, the one of right-angled Coxeter systems. If $(W,S)$ is of class $X$, then $(\tilde{W},\tilde{S})$ can be constructed of class $X$. Hence, if we restrict Conjectures  \ref{cong 2} and \ref{cong 3} to any class $X$, the second equivalence stated in Theorem \ref{teorema equiv} is still true. Since also the first equivalence remains true when the conjectures are restricted to any class, we conclude that the three equivalences of Theorem \ref{teorema equiv} are still true when the conjectures are restricted to any class $X$.

\begin{oss}
    All results stated in this paper for the parabolic Kazhdan-Lusztig polynomials can be formulated also for the parabolic $R$-polynomials. 
\end{oss}

\section{Acknowledgements}
I would like to thanks Grant Barkley, Christian Gaetz and Mario Marietti for some useful conversations, which have been possible as participant of the Workshop “Bruhat order: recent developments and open problems”, held
at the University of Bologna.
I also thanks Davide Bolognini for pointing me out the result of Barkley and Gaetz.

\end{document}